\newtheorem{lemma}{Lemma}
\newtheorem{theorem}{Theorem}
\newtheorem{definition}{Definition}
\newtheorem{proposition}{Proposition}
\newtheorem{remark}{Remark}
\def\R{{\mathbb R}}
\def\BA{{\mathcal B \mathcal A}}
\def\A{{\mathcal A}}
\def\M{{\mathcal M}}
\def\conv{\mathop{\mathrm{conv}}}
\def\eps{\varepsilon}
\def\const{\mathop{\mathrm{const}}}
\def\le{\leqslant}
\def\ge{\geqslant}
\begin{document}

\title{Linear programming over exponent pairs}
\author{Andrew V. Lelechenko}
\address{I.~I.~Mechnikov Odessa National University}
\email{1@dxdy.ru}

\keywords{Exponent pairs, divisor problem, linear programming}
\subjclass[2010]{
90C05, % Linear programming
11Y16, % Algorithms; complexity
11L07, % Estimates on exponential sums
11N37} % Asymptotic results on arithmetic functions

\begin{abstract}
We consider the problem of the computation of
$\inf_p \theta p$
over the set of exponent pairs $P \ni p$
under linear constraints
for a certain class of objective functions $\theta$.
An effective algorithm is presented. The output
of the algorithm leads to the improvement
and establishing new estimates in the various
divisor problems in the analytic number theory.
\end{abstract}

\maketitle

\section{Introduction}

Exponent pairs are an extremely important concept
in the analytic number theory. They are defined implicitly.

\begin{definition}[{\cite[Ch. 2]{kratzel1988}}]
A pair $(k,l)$ of real numbers is called an exponent pair
if~$0\le k \le 1/2\le l\le 1$, and if for each~$s>0$ there exist integer $r>4$ and real $c\in(0,1/2)$
depending only on $s$
such that the inequality
$$
\sum_{a<n\le b} e^{2\pi i f(n)} \ll z^k a^l
$$
holds with respect to $s$ and $u$ when
the following conditions are satisfied:
$$
u>0,
\qquad
1 \le a < b < a u,
\qquad
y>0,
\qquad
z = y a^{-s} > 1;
$$
$f(t)$ being any real function with differential coefficients
of the first $r$ orders in~$[a,b]$ and
$$
\left| f^{(\nu+1)}(t) - y {d^\nu \over dt^\nu } t^{-s} \right|
< (-1)^\nu c y {d^\nu \over dt^\nu} t^{-s}
$$
for $a\le t\le b$ and $0 \le \nu \le r-1$.
\end{definition}

But for the computational purposes more explicit construction
is needed.

\begin{proposition}\label{pr:exp-pairs}
The set of the exponent pairs includes a convex hull $\conv P$
of the set~$P$ such that
\begin{enumerate}

\item $P$ includes a subset of initial elements $P_0$, namely
	\begin{enumerate}
	\item $(0,1)$ \cite{kratzel1988},
	\item $(2/13+\eps,35/52+\eps)$,
				$(13/80+\eps,1/2+13/80+\eps)$,
				$(11/68+\eps,1/2+11/68+\eps)$ \cite{huxley1989},
	\item $(9/56+\eps,1/2+9/56+\eps)$ \cite{huxley1988},
	\item $(89/560+\eps,1/2+89/560+\eps)$ \cite{watt1989},
	\item $H_{05}:=(32/205+\eps,1/2+32/205+\eps)$ \cite{huxley2005}.
	\end{enumerate}

\item
	$A(k,l)\in P$ and $BA(k,l) \in P$ for every $(k,l)\in P$,
	where operators $A$ and~$B$ are defined as follows:
	$$
	A(k,l) = \left( {k\over2(k+1)}, {k+l+1\over2(k+1)} \right),
	\qquad
	B(k,l) = \left( l-1/2, k+1/2 \right).
	$$

\end{enumerate}
\end{proposition}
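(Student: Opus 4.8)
The plan is to establish three independent ingredients — that each initial element of $P_0$ is an exponent pair, that the operators $A$ and $B$ (hence also $BA$) send exponent pairs to exponent pairs, and that the convex hull of a set of exponent pairs consists of exponent pairs — and then to combine them by induction on the length of an $\{A,BA\}$-word generating a given point of $P$. Throughout, the delicate point is the quantifier structure ``for each $s$ there exist $r$ and $c$'' in the definition: each transformation has to be shown to supply, for every target $s$, admissible $r$ and $c$ expressed through those furnished by the hypothesis at a transformed value of $s$. As for $P_0$ itself, the pair $(0,1)$ is the trivial exponent pair, since $\sum_{a<n\le b}e^{2\pi i f(n)}$ has at most $b-a\ll a=z^0a^1$ terms, while each of the remaining pairs in (1b)--(1e) is precisely the principal exponential-sum estimate proved in the corresponding cited paper, of the shape $\sum\ll z^ka^l$ under hypotheses that reduce to those of the definition after the standard localisation $b\le 2a$ and a smoothing of $f$; so there one only has to match notation.

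For item (2) I would invoke van der Corput's two classical processes. In the $A$-process one squares the sum and applies the van der Corput inequality; the differenced phase $f(t+h)-f(t)$ then satisfies the hypotheses with $y$ replaced by a quantity $\asymp hy$ and $s$ replaced by $s+1$, so the inner sum is bounded by the assumed pair $(k,l)$, and optimising over the differencing length $H$ yields exactly $A(k,l)$. In the $B$-process one applies Poisson summation and evaluates the resulting exponential integrals by the stationary-phase (saddle-point) method: the sum becomes $\asymp|f''|^{-1/2}$ times a sum over the dual range $m\asymp z$ whose phase $g(m)=f(n_m)-mn_m$, with $f'(n_m)=m$, satisfies the hypotheses with $y$ replaced by $\asymp y^{1/s}$ and $s$ by $1/s$; bounding that sum by $(k,l)$ and collecting the factor $|f''|^{-1/2}\asymp a^{1/2}z^{-1/2}$ gives the bound $z^{l-1/2}a^{k+1/2}$, i.e. the pair $B(k,l)$. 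Since $B$ preserves the property, so does the composition $BA$, which is what the statement asserts.

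For the convexity step: given exponent pairs $(k_1,l_1)$ and $(k_2,l_2)$ and $\alpha\in[0,1]$, I would, for each $s$, take $r=\max(r_1,r_2)$ and $c=\min(c_1,c_2)$ from the two instances of the definition; any $f$ satisfying the hypotheses for these $r$ and $c$ then satisfies them for both pairs, so the sum $S$ obeys $|S|\ll z^{k_j}a^{l_j}$ for $j=1,2$, whence $|S|=|S|^{\alpha}|S|^{1-\alpha}\ll z^{\alpha k_1+(1-\alpha)k_2}\,a^{\alpha l_1+(1-\alpha)l_2}$; since the box $0\le k\le 1/2\le l\le 1$ is convex and contains both pairs, the combination $\alpha(k_1,l_1)+(1-\alpha)(k_2,l_2)$ is again an exponent pair, and therefore $\conv P$ consists of exponent pairs.

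The step I expect to be the main obstacle is the $B$-process: making the stationary-phase approximation genuinely rigorous — controlling the exponential integral away from the saddle point, the endpoint contributions, and the error term — and then checking that the smoothness and range hypotheses inherited by the dual phase $g$ really are of the form demanded by the definition, so that admissible $r$ and $c$ for $B(k,l)$ can actually be exhibited rather than merely expected. The $A$-process and the convexity argument, by contrast, are essentially bookkeeping, and the membership of the initial pairs in $P_0$ is a matter of quoting the literature.
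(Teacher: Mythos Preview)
The paper does not prove this proposition at all: it is stated as background, with each item carrying a citation, and the text moves on immediately afterwards. So there is nothing in the paper to compare your argument against; you have supplied a proof where the author merely quotes the literature.

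That said, your outline is the standard one and is essentially what the cited sources (in particular Kr\"atzel's book for $(0,1)$, the $A$- and $B$-processes, and convexity) contain. The three ingredients you isolate --- membership of $P_0$, closure under $A$ and $BA$, and closure under convex combinations --- together with an induction on word length are exactly how one assembles the result. Your identification of the $B$-process as the delicate step is accurate: the rigorous stationary-phase analysis and the verification that the dual phase inherits the required derivative bounds are where the real work lies, and this is precisely what the classical references carry out. The $A$-process and the convexity step are, as you say, routine. One small remark: the box constraints $0\le k\le 1/2\le l\le 1$ also need to be checked under $A$ and $B$ (not only under convex combinations), but this is a straightforward calculation.
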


Possibly the set of the exponent pairs includes elements
$(k,l) \not\in \conv P$, but at least~$\conv P$ incorporates all
{\em currently known} exponent pairs. Everywhere below writing
``a set of exponent pairs'' we mean $P$ in fact.

Denote by $Pp$ a set of exponent pairs, generated from the pair $p$
with the use of operators $A$ and $BA$. One can check that {\em currently}
$$
\conv P = \conv\bigr (PH_{05} \cup \{(0,1),(1/2,1/2)\} \bigr).
$$
%Thus we can set $P_0=\{(0,1),H_{05}\}$ without loss of generality.

\medskip

Many asymptotic questions of the number theory (especially in the
area of divisor problems) come to the optimization task
\begin{equation}\label{eq:opt-problem}
\inf_{(k,l) \in \conv P} \bigl\{
\theta(k,l) \bigm|
R_i(\alpha_i k + \beta_i l + \gamma_i),
\quad i=1,\ldots,j
\bigr\},
\end{equation}
where $\alpha_i, \beta_i, \gamma_i \in \mathbb{R}$,
$R_i\in {R_{>}, R_{\ge}}$, the predicate $R_{>}$ checks whether its argument is a positive value and $R_{\ge}$ checks whether its argument is non-negative, $i=1,\ldots, j$.

Graham \cite{graham1986} gave an effective method,
which in many cases is able to determine
\begin{equation*}\label{eq:opt-problem-(0,1)}
\inf_{(k,l)\in \conv P(0,1)} \theta(k,l)
\end{equation*}
with a given precision (and even exactly for certain values of $\theta$),
where
$$
\theta \in \Theta := \left\{
(k,l) \mapsto {ak+bl+c \over dk+el+f}
\Biggm|
{
a,b,c,d,e,f \in \mathbb{R},
\atop
dk+el+f > 0 \text{~for~} (k,l)\in \conv P
}
\right\}.
$$
We shall refer to this result as to {\em Graham algorithm.}
Unfortunately, for some objective functions~$\theta \in \Theta$
the algorithm fails and, as Graham writes, we should
``resort to manual calculations and ad hoc arguments''.
We discuss possible improvements in Section \ref{s:graham}.

The primary aim of the current paper is to provide an algorithm
to determine
\begin{equation}\label{eq:opt-problem-2}
\inf_{(k,l)\in P} \theta(k,l)
\end{equation}
under a nonempty set of linear constraints (thus $j\ne0$) and
\begin{equation}\label{eq:theta}
\theta = \max\{ \theta_1(k,l),\ldots,\theta_m(k,l) \},
\qquad \theta_1,\ldots,\theta_m \in \Theta.
\end{equation}

In Section \ref{s:projective} a useful computational
concept of projective exponent pairs is explained.
Section \ref{s:explore} is devoted to the exploration
of the geometry of $P$ and its results
are of separate interest.
In Section \ref{s:graham}
Graham algorithm is discussed.
Section \ref{s:algorithm} contains the description
of the proposed algorithm to solve \eqref{eq:opt-problem-2}
under linear constraints and \eqref{eq:theta}.
In Section \ref{s:consequences} new estimates and theoretical
results on various divisor problems are given, derived from
the observation of particular cases of the output
of our algorithm.

\section{Projective exponent pairs}\label{s:projective}

Let us map exponent pairs into the real projective space
(the concept of such mapping traces back to Graham
\cite{graham1986}):
$$
\mu\colon
\R^2 \to \R^3/(\R\setminus\{0\}),
\quad
(k,l) \mapsto (k:l:1).
$$
For the set of the exponent pairs
the inverse mapping
$$  \mu^{-1}\colon (k:l:m) \mapsto (k/m, l/m) $$
is also well-defined.

Operators $A$ and $BA$ are mapped by $\mu$
into linear operators over projective space:
$$
A(k,l) \mapsto \A (k:l:1),
\quad
\A = \begin{pmatrix} 1&0&0 \\ 1&1&1 \\ 2&0&2 \end{pmatrix},
\quad
\A(k:l:m) = \begin{pmatrix}k \\ k+l+m \\ 2k+2m \end{pmatrix}
$$
and
$$
BA(k,l) \mapsto \BA(k:l:1),
\quad \!\!\!
\BA =
\begin{pmatrix}
 0&1&0\cr
 2&0&1\cr
 2&0&2\cr
\end{pmatrix},
\quad
\BA(k:l:m) =
\begin{pmatrix}
l \\ 2k+m \\ 2k+2m
\end{pmatrix}.
$$
Thus $A=\mu^{-1} \A \mu$ and $BA = \mu^{-1} \BA \mu$

Such projective mappings are very useful to achieve better
computational performance.

Firstly,
we replace fractional calculations with integer ones.

Secondly, let $M$ be a fixed composition of $A$ and $BA$.
We can evaluate $Mp$ for a set of points $p$ effectively:
once precompute the matrix of the projective operator~$\M$
and then just calculate $\mu^{-1}\M\mu p$ for each point $p$.

\section{Exploring exponent pairs}\label{s:explore}

Let us split $Pp$ into {\em generations} $P_n p$ such that
$$
P_0 p = \{p\}, \qquad P_n p = AP_{n-1}p \cup BAP_{n-1}p,
\qquad n>0.
$$

Let us investigate properties of $P(0,1)$. As soon as
$$
A(0,1) = (0,1), \qquad BA(0,1) = (1/2,1/2),
$$
$$
A(1/2,1/2) =  BA(1/2,1/2) = (1/6,2/3)
$$
we obtain
$$
P(0,1) = \bigl\{(0,1), (1/2,1/2)\bigr\} \cup P(1/6,2/3).
$$
So it is enough to study $P(1/6,2/3)$.

All initial exponent pairs satisfy inequalities
$$ k+l\le1, \qquad k\le1/2, \qquad l\ge1/2. $$
One can check that if $(k,l)$ satisfies such inequalities,
then $A(k,l)$ and $BA(k,l)$ also do. Thus all exponent
pairs fits into the triangle
\begin{equation}\label{eq:triangle}
T:= \triangle\bigl( (1/2,1/2), (0,1), (0,1/2) \bigr).
\end{equation}

\begin{lemma}\label{l:lemma1}
Denote
$$
P' = (0,1/6)\times(2/3,1),
\qquad
P'' = (1/6,1/2)\times(1/2,2/3).
$$
Let $p:=(k,l)$ be the exponent pair such that $A p \in P'$. Then
\begin{equation}\label{eq:lemma1-statement}
A Pp \subset P',
\qquad
BA Pp \subset P'',
\qquad
Pp \subset \{p\} \cup P' \cup P''.
\end{equation}
\end{lemma}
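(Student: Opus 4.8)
The plan is to set up induction on the generation number $n$, tracking where each generation $P_n p$ lands. The key observation to establish first is a \emph{closure property}: if a point $q=(k,l)$ lies in the rectangle $P'=(0,1/6)\times(2/3,1)$, then both $Aq$ and $BAq$ lie in $P'$; and if $q$ lies in $P''=(1/6,1/2)\times(1/2,2/3)$, then $Aq\in P'$ while $BAq\in P''$. These are four elementary verifications: for instance, writing $A(k,l)=\bigl(k/(2(k+1)),(k+l+1)/(2(k+1))\bigr)$, one checks that $0<k<1/6$ forces the first coordinate into $(0,1/6)$ (the map $k\mapsto k/(2(k+1))$ is increasing, sends $0$ to $0$ and $1/6$ to $1/14<1/6$), and similarly for the second coordinate using $k+l<7/6$ on $P'$ and the bound on $P''$. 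The map $BA(k,l)=(l-1/2,k+1/2)$ is even simpler: it shifts the $l$-range $(2/3,1)$ down to $(1/6,1/2)$, too large for $P'$ but landing in the first coordinate range of $P''$, and shifts $k$ up correspondingly. One must be slightly careful that $B A$ applied to a point of $P'$ gives the $k$-coordinate $l-1/2\in(1/6,1/2)$, which is exactly the first factor of $P''$, not $P'$ — this is why the statement separates $A Pp$ from $BA Pp$.

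With the closure lemma in hand, I would argue as follows. By hypothesis $Ap\in P'$. I claim $AP_n p\subset P'$ and $BAP_n p\subset P''$ for all $n\ge0$, and moreover $P_n p\subset P'\cup P''$ for $n\ge1$. For $n=0$: $AP_0 p=\{Ap\}\subset P'$ by assumption; and $BAP_0 p=\{BAp\}$, where $BAp=B(Ap)$ with $Ap\in P'$, so by the closure property $BAp\in P''$. For the inductive step, every element of $P_{n+1}p$ is either $Aq$ or $BAq$ for some $q\in P_n p$; by induction $q\in P'\cup P''$ (for $n\ge1$; for $n=0$, $q=p$ and we use $Ap\in P'$ directly to see $q$'s images behave, though $p$ itself need not lie in $P'\cup P''$), and then the closure property gives $Aq\in P'$ and $BAq\in P''$. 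Taking unions over all such $q$ yields $AP_{n+1}p\subset P'$ and $BAP_{n+1}p\subset P''$, hence $P_{n+2}p\subset P'\cup P''$. Since $Pp=\bigcup_{n\ge0}P_n p=\{p\}\cup\bigcup_{n\ge1}P_n p$, we conclude $Pp\subset\{p\}\cup P'\cup P''$, $APp=\bigcup_n AP_n p\subset P'$, and $BAPp\subset P''$.

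The main obstacle, such as it is, is purely bookkeeping at the base of the induction: the point $p$ itself is not assumed to lie in $P'\cup P''$ (only $Ap$ is), so one cannot blindly say "$P_n p\subset P'\cup P''$ for all $n$" — it fails at $n=0$. The cleanest fix is to phrase the induction on the two statements $AP_n p\subset P'$ and $BAP_n p\subset P''$ directly (these hold already at $n=0$), and then deduce the containment of $Pp$ as a corollary. Everything else reduces to the four monotonicity-and-endpoint checks for $A$ and $BA$ on the two rectangles, which are routine since both operators are coordinatewise monotone in the relevant ranges, so it suffices to verify the image of each corner of $\overline{P'}$ and $\overline{P''}$.
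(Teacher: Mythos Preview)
Your approach is essentially the paper's: both argue by induction on the generation index, reduce the $BA$ claim to the $A$ claim via $B(P')=P''$, and verify by direct computation that $A$ sends each of $P'$ and $P''$ into $P'$ (your ``closure property'' is exactly the paper's case split). Two slips to correct: you write that for $q\in P'$ ``both $Aq$ and $BAq$ lie in $P'$'' where you mean $BAq\in P''$, and the formula you display as $BA(k,l)=(l-1/2,k+1/2)$ is in fact the formula for $B$, not $BA$; this is harmless here because what you actually use later is $BAq=B(Aq)$ together with $Aq\in P'$ and $B(P')=P''$, which is precisely how the paper handles it.
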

\begin{proof}
Suppose that \eqref{eq:lemma1-statement} is true
for all generations $P_m$, $m< n$. Let us prove that
it is also true for generation $P_n$.

We have $B P' = P''$, so it is enough to prove
that $AP_{n-1}p \subset P'$. Let $(k,l)$ be an arbitrary
element of $P_{n-1}p$ and let $(\kappa, \lambda) = A(k,l)$.
There are three possibilities:
\begin{enumerate}
\item $(k,l) = p$. Then $(\kappa,\lambda)\in P'$
by conditions of the lemma.

\item $(k,l)\in P'$. Then
\begin{align*}
\kappa &= {1\over2} - {1\over2(k+1)}
< {1\over2} - {3\over7} = {1\over14}, \\
\lambda &= {1\over2} + {l\over2(k+1)}
> {1\over2} + {2/3\over7/3} = {11\over14}.
\end{align*}

\item $(k,l)\in P''$. Then
\begin{align*}
\kappa &= {1\over2} - {1\over2(k+1)}
< {1\over2} - {1\over3} = {1\over6}, \\
\lambda &= {1\over2} + {l\over2(k+1)}
> {1\over2} + {1/2\over3} = {2\over3}.
\end{align*}
\end{enumerate}
\end{proof}

An exponent pair $(1/6,2/3)$ satisfies conditions
of Lemma~\ref{l:lemma1}, because
$$
A(1/6, 2/3) = (1/14,11/14).
$$

We note that the statement of Lemma \ref{l:lemma1}
can be refined step-by-step, obtaining 4, 8, 16 and so on
rectangles, covering $Pp$ more and more precisely.

\begin{remark}\label{remark}
There exists another approach to cover $Pp$
or the whole $P$. For a set of
pairs $(\alpha_i, \beta_i)$ determine
with the use of Graham algorithm
$$ \theta_i = \inf_{(k,l)\in \conv Pp} (\alpha_i k + \beta_i l). $$
Then $Pp$ is embedded into a polygonal area, constrained
with the set of inequalities
$$ \alpha_i k + \beta_i l \ge \theta_i $$
from the bottom and left
(together they form a hyperbola-like line)
and by the segment from $(0,1)$ to $(1/2,1/2)$.
\end{remark}

\medskip

Let us introduce an order $\prec$ on $P(1/6,2/3)$, defined as
$$
(k,l) \prec (\kappa, \lambda)
\iff
k < \kappa, ~ l > \lambda.
$$

\begin{theorem}\label{th:strict-total-order}
Let $p$ be the exponent pair from the statement
of Lemma~\ref{l:lemma1}.
Then the order $\prec$ is a strict total order on $P_n p$
and this order coincides with the order
of the binary Gray codes \cite[Ch. 7.2.1.1]{knuth2011}
over an alphabet $\{A,BA\}$.
\end{theorem}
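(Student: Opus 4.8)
The plan is to induct on the generation number $n$, maintaining simultaneously two assertions: that $\prec$ is a strict total order on $P_n p$, and that the $\prec$-ordered list of elements of $P_n p$ is exactly the list obtained by applying, in Gray-code order, all length-$n$ words in $\{A, BA\}^{\ast}$ to $p$. The base case $n=0$ is trivial since $P_0 p = \{p\}$. For the inductive step I would use the recursion $P_n p = A P_{n-1} p \cup BA P_{n-1} p$ together with the geometric separation supplied by Lemma~\ref{l:lemma1}: the operator $A$ maps all of $Pp$ into the rectangle $P' = (0,1/6)\times(2/3,1)$ and $BA$ maps it into $P'' = (1/6,1/2)\times(1/2,2/3)$. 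Since every point of $P'$ has first coordinate $<1/6$ and every point of $P''$ has first coordinate $>1/6$, and (symmetrically) the second coordinates separate the other way, every element of $A P_{n-1} p$ is $\prec$-below every element of $BA P_{n-1} p$. So the global order on $P_n p$ is the concatenation of the order on the $A$-block followed by the order on the $BA$-block.

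Next I would show that each operator, restricted to $P(1/6,2/3)$, is \emph{strictly $\prec$-monotone}: if $(k,l) \prec (\kappa,\lambda)$ then $A(k,l) \prec A(\kappa,\lambda)$, and likewise for $BA$. For $A$ this is a short computation: the first coordinate of $A(k,l)$ is ${1\over2} - {1\over 2(k+1)}$, which is strictly increasing in $k$, while the second is ${1\over2} + {l\over 2(k+1)}$, which is strictly increasing in $l$ and strictly increasing in $k$; hence $k<\kappa$ and $l>\lambda$ force the first coordinate to increase and the second to decrease only if the $k$-dependence in $\lambda$ doesn't spoil it --- so one must check the combined effect, but since $k < \kappa$ pushes $l/(2(k+1))$ down and $l > \lambda$ also pushes it down, the second coordinate of the image strictly decreases. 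For $BA(k,l) = (l - 1/2,\, k+1/2)$ monotonicity is immediate: $l > \lambda \Rightarrow l-1/2 > \lambda - 1/2$ and $k < \kappa \Rightarrow k+1/2 < \kappa+1/2$, which is exactly $BA(k,l) \prec BA(\kappa,\lambda)$ after noting the order swaps the roles of the coordinates consistently. Monotonicity immediately gives that $A P_{n-1}p$ and $BA P_{n-1}p$ are each totally ordered by $\prec$ (order-isomorphic to $P_{n-1}p$), and combined with the block separation from the previous paragraph, $P_n p$ is totally ordered.

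It remains to match this order with the Gray code. Recall the standard recursive description of the $n$-bit reflected Gray code: it is the first half, namely the $(n{-}1)$-bit Gray code with a $0$ prepended, followed by the second half, namely the $(n{-}1)$-bit Gray code \emph{reversed} with a $1$ prepended. Under the dictionary $0 \leftrightarrow A$, $1 \leftrightarrow BA$ (prepending a letter = applying the operator first, i.e.\ outermost), the $A$-block of $P_n p$ should list $A w p$ for $w$ ranging over the $(n{-}1)$-bit Gray code in order, which by the induction hypothesis is $A$ applied to the $\prec$-sorted $P_{n-1}p$, and by monotonicity of $A$ this is $\prec$-sorted --- matching the Gray code's first half. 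The subtle point, and the one I expect to be the main obstacle, is the \emph{reflection}: the $BA$-block must appear in the reversed Gray-code order. This forces the claim that $BA$ is order-\emph{reversing} with respect to how the words were enumerated --- but wait, we just showed $BA$ is order-preserving on points. The resolution is that the Gray-code reflection is a statement about \emph{words}, and the correspondence between word-order and point-order must itself alternate: one has to track that applying $BA$ as the outermost operator composes with an already-built structure in a way that reverses. Concretely, I would prove by induction the sharper statement that the map (length-$n$ Gray-code word) $\mapsto$ ($\prec$-rank in $P_n p$) is the identity, checking that prepending $A$ preserves ranks within the first block while prepending $BA$ sends Gray-rank $j$ in the $(n{-}1)$-list to rank $2^{n-1} + (2^{n-1} - 1 - j)$ --- and verifying this is consistent with $BA$ being $\prec$-monotone requires comparing the $\prec$-order on $BA P_{n-1}p$ (which is the $P_{n-1}p$-order by monotonicity) against the reversed Gray enumeration, i.e.\ it forces $P_{n-1}p$'s $\prec$-order to be the \emph{reverse} of its own Gray order, contradicting the induction hypothesis unless the parity is handled correctly. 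The clean way out is to note that the reflected Gray code has the property that word $w$ and its bit-complement-free reflection interact via: the second half in natural word order equals the first half in \emph{reverse}, so the point-order (which is $A$-block then $BA$-block, each in natural $P_{n-1}p$-order) matches iff the $BA$-block's natural order equals the reversed second-half words --- which is true precisely because the second half of the Gray code, read forwards, consists of the first-half words read backwards. So no contradiction: the $BA$-block in $\prec$-order corresponds to Gray words $1u$ where $u$ runs backwards through the $(n{-}1)$-code, which is exactly the second half of the $n$-code in forward order. I would present this last paragraph carefully with an explicit indexing of the $2^n$ words to make the reflection bookkeeping airtight; everything else is the routine monotonicity and separation computations above.
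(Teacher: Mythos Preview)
Your overall architecture (induction, block separation via Lemma~\ref{l:lemma1}, monotonicity of the operators) is the same as the paper's, but there is a genuine error that is precisely the source of your long and tangled final paragraph: $BA$ is order-\emph{reversing}, not order-preserving. Your own computation shows this. If $(k,l)\prec(\kappa,\lambda)$, i.e.\ $k<\kappa$ and $l>\lambda$, then $BA(k,l)=(l-\tfrac12,\,k+\tfrac12)$ has \emph{larger} first coordinate and \emph{smaller} second coordinate than $BA(\kappa,\lambda)=(\lambda-\tfrac12,\,\kappa+\tfrac12)$; by the definition of $\prec$ this says $BA(k,l)\succ BA(\kappa,\lambda)$, not $\prec$. (The paper phrases this as ``$B$ reverses the order, so $BA$ does too.'') A smaller slip of the same flavour: the second coordinate of $A(k,l)$, namely $\tfrac12+\tfrac{l}{2(k+1)}$, is increasing in $l$ and \emph{decreasing} in $k$, not increasing in $k$; with this sign you get immediately that $A$ preserves $\prec$, without the hedging you add.

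Once $BA$ is recognised as order-reversing, the Gray-code identification is immediate and your entire last paragraph evaporates. By induction the $\prec$-order on $P_{n-1}p$ is the $(n{-}1)$-bit Gray order; $A$ preserves it, so the $A$-block of $P_np$ in $\prec$-order is the $(n{-}1)$-Gray list with $A$ prepended; $BA$ reverses it, so the $BA$-block in $\prec$-order is the $(n{-}1)$-Gray list \emph{reversed} with $BA$ prepended. Concatenating (the block separation $A\,P_{n-1}p\prec BA\,P_{n-1}p$ you already have) gives exactly the recursive definition of the $n$-bit reflected Gray code. There is no ``contradiction'' to resolve and no parity bookkeeping needed; the reflection in the Gray code \emph{is} the order reversal effected by $BA$. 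This is how the paper argues, in three lines.
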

\begin{proof}
One can directly check that operator $A$ saves the order:
$$
p_1 \prec p_2 \Rightarrow A p_1 \prec A p_2
$$
and operator $B$ reverses it, so $BA$ reverses it too:
$$
p_1 \prec p_2 \Rightarrow BA p_1 \succ BA p_2.
$$
Lemma \ref{l:lemma1} implies that
for every $p_1,p_2\in P_{n-1}p$
\begin{equation}\label{eq:order-a-ba}
A p_1 \prec BA p_2.
\end{equation}
Combining these facts we obtain the statement of the theorem.
\end{proof}

In the case of $P(1/6,2/3)$ inequality \eqref{eq:order-a-ba}
can be refined up to
\begin{equation}\label{eq:order-a-1/6,2/3-ba}
A p_1 \prec (1/6,2/3) \prec BA p_2.
\end{equation}
Thus $\prec$ is a strict total order
over the whole $P(1/6,2/3)$.

\medskip

Fig. \ref{fig:p1623} illustrates our results.
Point $(1/6,2/3)$ divides the set into rectangles~$P'$
and $P''$. These rectangles consists of pairs, where the last applied
operator was $A$, and pairs, where the last applied operator
was $BA$, respectively. All plotted points are
total-ordered by $\prec$. Writing out points
of the same generation from the left top corner
to the right bottom corner we obtain a list of Gray codes.
E.~g., for the generation 3 we obtain
a sequence of 8 codes:

\nobreak

\centerline{
\begin{tabular}{r@{}r@{}r}
$A$&$A$&$A$,\\
$A$&$A$&$BA$,\\
$A$&$BA$&$BA$,\\
$A$&$BA$&$A$,\\
$BA$&$BA$&$A$,\\
$BA$&$BA$&$BA$,\\
$BA$&$A$&$BA$,\\
$BA$&$A$&$A$.\\
\end{tabular}
}

\noindent
\begin{figure}
\includegraphics[width=\textwidth]{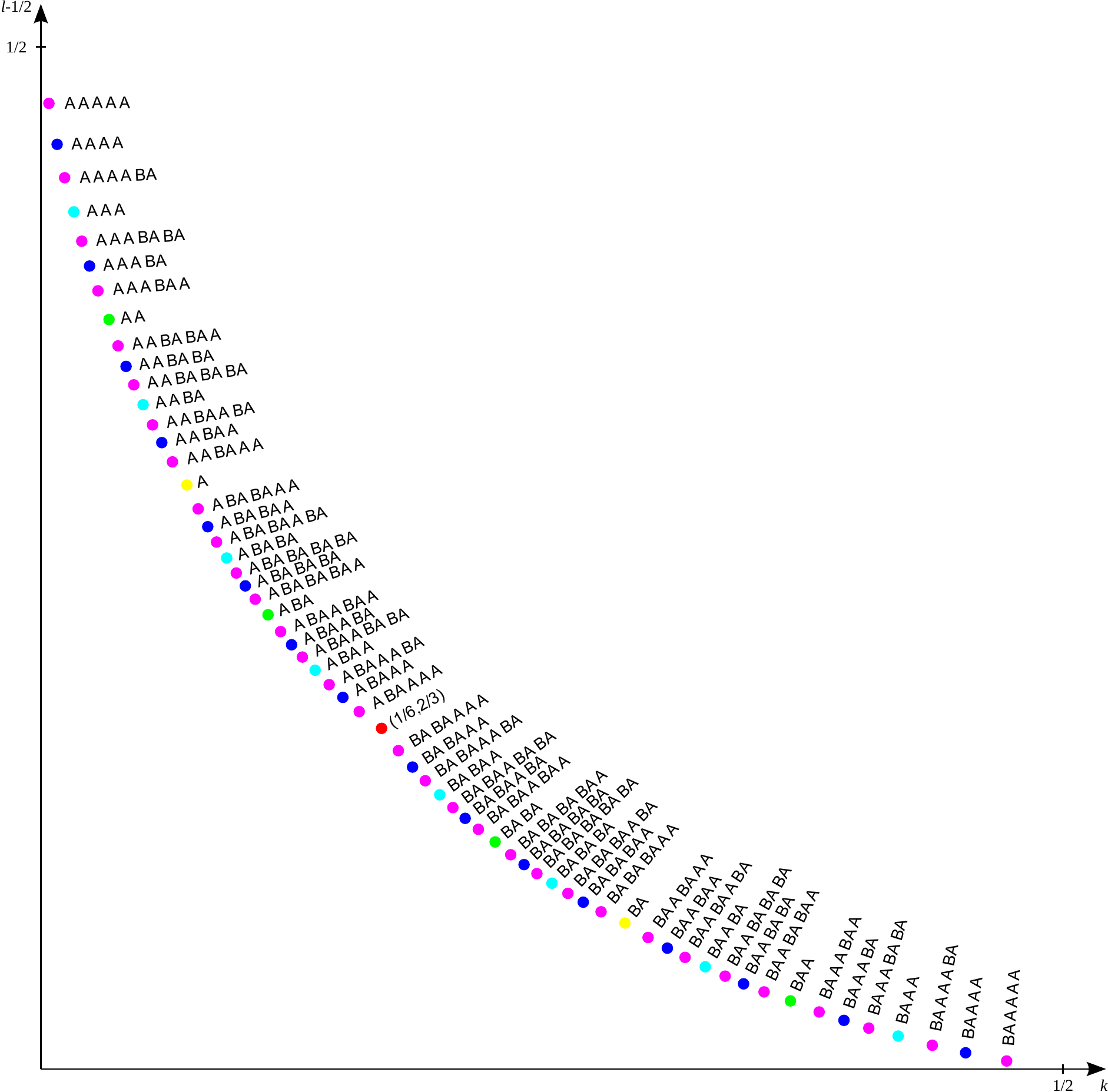}
\caption{First six generations of $P(1/6,2/3)$
plotted in shifted coordinates $(k,l-1/2)$.}
\label{fig:p1623}
\end{figure}

\medskip

As soon as
$$
A^n (1/6,2/3) \to (0+{},1-0) \qquad \text{as~} n\to\infty
$$
we obtain
\begin{align*}
p_{n} := A\cdot BA \cdot A^n(1/6,2/3) &\to (1/6-0,2/3+0), \\
BA\cdot BA \cdot A^n(1/6,2/3) &\to (1/6+0,2/3-0)
\qquad \text{as~} n\to\infty.
\end{align*}
So no point from $P(1/6,2/3)$ is isolated:
for every $p\in P(1/6,2/3)$ and every~$\eps>0$
there exist $p_1, p_2 \in P(1/6,2/3)$ such that
$p_1 \prec p \prec p_2$, $|p-p_1|<\eps$ and~$|p-p_2|<\eps$.

We are even able to compute the slopes of left-hand
and right-hand ``tangents'' at $(1/6,2/3)$. Namely,
using Section \ref{s:projective}
and denoting $d_n=p_n-(1/6,2/3)$ we get
$$
d_n/|d_n| \to (-2/\sqrt5,1/\sqrt5)
\qquad \text{as~} n\to\infty,
$$
so the left-hand ``tangent'' at $(1/6,2/3)$
has a slope $\arctan(-1/2)$. The right-hand ``tangent''
has a slope $\arctan(-2)$.

\medskip

What about sets generated from other known initial
exponent pairs, listed in Proposition \ref{pr:exp-pairs}?
Lemma \ref{l:lemma1} and Theorem \ref{th:strict-total-order}
remains valid. But inequality \eqref{eq:order-a-1/6,2/3-ba}
does not hold and so $\prec$ is not a strict total order.
E.~g., for
$$
p=A\cdot BA\cdot A^4 H_{05} = \left( {8083\over50342}, {1\over2}+{4304\over25171} \right)
$$
neither $p \prec H_{05}$, nor~$p \succ H_{05}$.

As opposed to $P(1/6,2/3)$, each point of the set $Pp$,
$p\ne(1/6,2/3)$, is isolated, because the initial point is.
But for every such $p$ each point of $P(1/6,2/3)$
has an arbitrary close to it point from $Pp$.

\medskip

\begin{lemma}\label{l:contraction}
Operators $A$ and $BA$ are contractions over the triangle $T$,
which was defined in \eqref{eq:triangle}.
\end{lemma}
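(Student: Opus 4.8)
The plan is to reduce the claim to the single map $A$ and then estimate its Lipschitz constant. First note that $B(k,l)=(l-1/2,\,k+1/2)$ is an affine isometry of $\R^2$, since its linear part $\left(\begin{smallmatrix}0&1\\1&0\end{smallmatrix}\right)$ is orthogonal. Moreover $B$ permutes the vertices of $T$, interchanging $(1/2,1/2)$ and $(0,1)$ and fixing $(0,1/2)$, so $B(T)=T$. Consequently $|BA(p_1)-BA(p_2)|=|A(p_1)-A(p_2)|$ for all $p_1,p_2$, and $BA(T)=B(A(T))\subseteq B(T)=T$. Since $A(T)\subseteq T$ follows from the elementary inequalities recorded just before the statement (together with $k\ge0$, which $A$ plainly preserves), it suffices to show that $A$ is a contraction of $T$.

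Next apply the mean value inequality on the convex set $T$, on a neighbourhood of which $A$ is continuously differentiable (its denominator $2(k+1)$ does not vanish there):
$$
|A(p_1)-A(p_2)|\ \le\ \Bigl(\sup_{p\in T}\bigl\|DA(p)\bigr\|\Bigr)\,|p_1-p_2| .
$$
It then remains to bound the operator norm of the Jacobian
$$
DA(k,l)=\begin{pmatrix} \dfrac{1}{2(k+1)^2} & 0 \\[1.2ex] -\dfrac{l}{2(k+1)^2} & \dfrac{1}{2(k+1)} \end{pmatrix}.
$$
On $T$ one has $0\le k\le1/2$ and $1/2\le l\le1$, so each of the three nonzero entries lies in $[0,1/2]$ in absolute value; hence $\|DA(p)\|$ does not exceed the Frobenius norm, which is at most $\sqrt{3}/2<1$. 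Therefore $A$, and with it $BA$, is a contraction of $T$, with Lipschitz constant $\sqrt{3}/2$.

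The only point needing care is this last estimate: a rational map is Lipschitz only locally in general, and it is exactly the restriction to $T$ — equivalently the constraints $k\ge0$ and $l\le1$ — that keeps $A$ away from its pole at $k=-1$ and forces every entry of $DA$ below $1/2$, so the triangle in the statement genuinely cannot be dropped. If a sharper value is wanted, one can either compute $A(p_1)-A(p_2)$ in closed form and diagonalise the resulting quadratic form in $|k_1-k_2|$ and $|l_1-l_2|$, or observe that the largest singular value of $DA(k,l)$ is monotone in the (nonnegative) entries and hence maximised at $(k,l)=(0,1)$; either way the optimal Lipschitz constant is $\sqrt{(3+\sqrt5)/8}=(1+\sqrt5)/4=0.809\ldots$, which is more than is needed here.
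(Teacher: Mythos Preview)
Your proof is correct. The paper takes a slightly different, purely algebraic route: it writes out $|Ap_1-Ap_2|^2$ explicitly, uses $k_1,k_2\ge 0$ to bound the denominators $(k_i+1)\ge1$, and then applies $(x+y)^2\le 2(x^2+y^2)$ to the cross term, arriving at the same bound $|Ap_1-Ap_2|^2\le\frac34\,|p_1-p_2|^2$. Your Jacobian/Frobenius-norm argument reaches the identical constant $\sqrt3/2$ with less manipulation and is more transparent about exactly which constraints on $T$ are being used (only $k\ge0$ and $l\le1$). You also make the reduction from $BA$ to $A$ explicit via the observation that $B$ is an isometry with $B(T)=T$, which the paper asserts without comment. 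Your closing remark on the sharp constant $(1+\sqrt5)/4$ is a genuine addition not present in the paper.
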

\begin{proof}
It is enough to prove that $A$ is a contraction.
Let us check that there exists~$\alpha<1$
such that for each~$p_1,p_2\in T$ we have
$$
\bigl|A p_1-A p_2\bigr| \le \alpha \bigl|p_1-p_2\bigr|.
$$
Let $(k_1,l_1):=p_1$ and $(k_2,l_2):=p_2$. Then
\begin{multline*}
\bigl|A p_1-A p_2\bigr|^2
= {1\over4} \left( \left({1\over k_1+1}-{1\over k_2+1}\right)^2
+ \left({l_1\over k_1+1}-{l_2\over k_2+1}\right)^2  \right)
= \\
= {1\over4} \left( \left( k_2-k_1 \over (k_1+1)(k_2+1) \right)^2
+ \left( (l_1-l_2)(k_2+1)+l_2(k_2-k_1)
  \over (k_1+1)(k_2+1) \right)^2 \right).
\end{multline*}
But $k_1,k_2\ge0$, so
$$
\bigl|A p_1-A p_2\bigr|^2
\le {1\over4} \left( (k_1-k_2)^2
+ \left( |l_1-l_2| + |k_1-k_2| \right)^2 \right).
$$
Applying inequality $(x+y)^2 \le 2(x^2+y^2)$ we finally obtain
$$
\bigl|A p_1-A p_2\bigr|^2
\le {3\over4} \left( (k_1-k_2)^2 + (l_1-l_2)^2 \right)
= {3\over4} \bigl|p_1-p_2\bigr|^2.
$$
\end{proof}

\section{Notes on Graham algorithm}\label{s:graham}

Below GX means a reference to \cite[Step X at p.~209]{graham1986}.

\medskip

1. Graham algorithm is designed to search $\inf_{p\in P(0,1)} \theta p$ and relies on the fact that
$$
P(0,1) = A P(0,1) \cup BA P(0,1).
$$
This kind of decomposition does not hold for the whole $P$. Instead we have
$$
P = A P \cup BA P \cup \bigl( P_0 \setminus \{ (0,1) \} \bigr).
$$
Thus in order to run Graham algorithm over $P$, not just over $P(0,1)$,
it should be changed in following way. Establish a variable $r$
to keep a current minimal value, setting it initially to~$+\infty$.
Add an additional step before G5: apply current $\theta$
on elements of $P_0$ and set~$r \gets \min(r, \min \theta P_0)$.
At the end of the algorithm output $r$ instead of simply $\min \theta P_0$.

\medskip

2. Unfortunately, Graham algorithm over $P$ is infinite: no analog of halting conditions at G3 provided by \cite[Th. 3]{graham1986} can be easily derived. So we should stop depending on whether the desired accuracy is achieved. Cf. Step~\ref{item:abort} in the Section~\ref{s:algorithm} below.

\medskip

3. Bad news: if \cite[Th. 1, 2]{graham1986} does not specify the branch to choose at G4 then the original Graham algorithm halts. Good news: \cite[Th. 2]{graham1986} can be generalized to cover a wider range of cases. In notations of the mentioned theorem for a given finite sequence $M\in\{A,BA\}^n$ if $\inf \theta BA = \inf \theta BA M$ and if
$$
\min (r w+v-u, \alpha w+v-u) \ge 0
$$
then $\inf \theta = \inf \theta A$, where
$$
\alpha := \max\bigl\{ k+l \bigm| (k,l) \in AM\{(0,1),(1/2,1/2),(0,1/2)\} \bigr\}.
$$

\medskip

4. For the case of linearly constrained optimization one can build a ``greedy'' modification of Graham algorithm: if at G5 one of the branches is entirely out of constraints then choose another one; otherwise choose a branch in a normal way. Such algorithm executes pretty fast, but misses optimal pairs sometimes.

\section{Linear programming algorithm}\label{s:algorithm}

Now let us return to the optimization
problem \eqref{eq:opt-problem-2}. We will attack it
with the use of backtracking.

Operators $A$ and $BA$ perform projective mappings
of the plane~${\mathbb R}^2$, so both of them map straight
lines into lines and polygons into polygons.

Let $\theta$ be as in \eqref{eq:theta} and a set of linear
constraints $LC$ be as in \eqref{eq:opt-problem}.

Denote
\begin{equation*}
\theta_+(V)
= \max \Bigl\{ \sup_{p\in V} \theta_i p \Bigr\}_{i=1}^m,
\qquad
\theta_-(V)
= \max \Bigl\{ \inf_{p\in V} \theta_i p \Bigr\}_{i=1}^m.
\end{equation*}
Then
$$
\theta_-(V) \le \inf_{p\in V} \theta p \le  \theta_+(V)
$$
and these bounds embrace $\inf_{p\in V} \theta p$ tighter
and tighter as $V$ becomes smaller. Both~$\theta_+$
and $\theta_-$ can be computed effectively by simplex method.

Let $V$ be a polygon (or a set of polygons, lines
and points) such that~$P \subset V$.
See Lemma \ref{l:lemma1} and paragraphs above and below it
for possible constructions of~$V$.
For a set of linear constraints $LC$ let $R(V,LC)$
be a predicate, which is true if and only if there exists
a point $p\in V$, which satisfies all constraints from~$LC$.
This predicate
can be computed effectively using algorithms
for line segment intersections~\cite[p.~19--44]{berg2008}.

The proposed algorithm consists of a routine
$L(\theta, LC, r, \M)$, which calls itself recursively.
Here $r$ keeps a current minimal value of $\theta(k,l)$
and $\M$ is a current projective transformation matrix.
Initially $r \gets +\infty$ and
$\M \gets \left(
\begin{smallmatrix} 1&0&0\\0&1&0\\0&0&1 \end{smallmatrix}
\right)$.

\medskip

On each call routine $L$ performs following steps:
\begin{enumerate}
\item
Compute $ t \gets \min \{ \theta \mu^{-1} \M \mu p \}$,
where $p$ runs over all known initial pairs~$p$.
If we get $t<r$ then update current minimal value: $r\gets t$.
\item \label{item:abort}
Check whether the desired accuracy is achieved,
comparing $r$ with the values of $\theta_+(\mu^{-1}\M\mu V)$
and $\theta_-(\mu^{-1}\M\mu V)$.
If yes then return $r$ and abort computations.
\item \label{item:lc'}
Set
$LC' \gets LC \cup \{ \theta_i(k,l) < r \}_{i=1}^m$.
Due to the nature of $\theta_i \in \Theta$
a constraint of form~$\theta_i(k,l) < r$ is in fact a linear constraint.
\item
%Should we search through the branch of pairs of form $\M\A$?
If $R(\mu^{-1}\M\A\mu V, LC')$ (that means that there is
at least a chance to meet exponent pair
$p\in \mu^{-1}\M\A\mu V$, which satisfies $LC$
and on which objective function is less
than yet achieved value) then compute
$t\gets L(\theta, LC, r, \M \mathcal{A})$.
If $t<r$ set~$r\gets t$ and recompute $LC'$ as in
Step \ref{item:lc'} using the new value of $r$.
\item
If $R(\mu^{-1}\M\BA\mu V, LC')$ then compute
$t\gets L(\theta, LC, r, \M \mathcal{BA})$.
If $t<r$ set~$r\gets t$.
\item
Return $r$.
\end{enumerate}

The algorithm executes in finite time, because due to
Lemma~\ref{l:contraction} both $A$ and $BA$ are contractions
and sooner or later (depending on required accuracy)
recursively called routines will abort at Step \ref{item:abort}.

Step \ref{item:lc'} plays a crucial role in chopping off
non-optimal branches of the exhaustive search and preventing exponential running time. We are not able to provide any theoretical estimates, but in all our experiments (see Section~\ref{s:consequences} below) the number of calls of $L(\cdot, \cdot, \cdot, \cdot)$ behaved like a linear function of the recursion's depth.

We have implemented our algorithm as a program, written
in PARI/GP~\cite{parigp}. It appears that it runs pretty fast,
in a fraction of a second on the modern hardware.

During computations elements of $\M$ can grow enormously.
As soon as matrix~$\M$ is applied on projective vectors
we can divide $\M$ on the greatest common divisor
of its elements to decrease their magnitude.

\medskip

Now, under which circumstances an equality
$$
\inf_{p \in P} \theta p
=
\inf_{p \in \conv P} \theta p
$$
holds? Certainly it is true for $\theta\in\Theta$  and $j=0$,
because sets $\{\theta p=\const\}$ are straight lines;
this is the case of Graham algorithm.

Consider the case $\theta \in \Theta$ and $j\ne0$. Constraining lines are specified by equations
$$
l_i = \{ \alpha_i k + \beta_i l + \gamma_i = 0 \}, \qquad i=1,\ldots,j.
$$
Then
$$
\inf_{p\in\conv P} \theta p = \min \left\{
\inf_{p\in P} \theta p,
\inf_{p\in l_1 \cap \conv P} \theta p,
\ldots,
\inf_{p\in l_j \cap \conv P} \theta p
\right\}.
$$
But $\conv P$ is approximated by a polygon as in Remark \ref{remark}, so $l_i \cap \conv P$ can be approximated too
and consists of a single segment. Thus
$\inf_{p\in l_i \cap \conv P} \theta p$ is computable.

The case when $\theta$ is as in \eqref{eq:theta}
with $m>1$ is different. Even without any constraints
the value of $\inf_{p\in\conv P} \theta p$ may be not equal
to $\inf_{p\in P} \theta p$. For example, take
$$
\theta(k,l) = \max \bigl\{ 11k/10, l-1/2 \bigr\}
.$$
Then
$$
\inf_{p\in P} \theta p = {176 \over 1025}
\qquad \text{at~}
p=H_{05}.
$$
But
$$
\inf_{p\in\conv P} \theta p = {176\over 1057 }
\qquad \text{at~}
p=(160/1057,1409/2114):=q,
$$
and $q$ is owned by
a segment from $(0,1)$ to~$H_{05}$. However, in not-so-synthetic
cases the proposed algorithm produces results, which are closer
to optimal.

\section{Applications}\label{s:consequences}

One can run algorithm from the previous section to obtain
numerical results in partial cases for different objective
functions and constraints. It gives us a way to catch site
of some patterns and to suppose general statements on them.
Nevertheless these patterns should be proved,
not only observed. This is the main theme
of the current section.

Consider the asymmetrical divisor problem.
Denote
$$
\tau(a_1,\ldots,a_k; n)
= \sum_{d_1^{a_1}\cdots d_k^{a_k} = n} 1,
$$
which is called {\em an asymmetrical divisor function.}
Let $\Delta(a_1,\ldots,a_k; x)$ be an error term
in the asymptotic estimate of the
sum~$\sum_{n\le x} \tau(a_1,\ldots,a_k; n)$.
(See~\cite{kratzel1988} for the form of the main term.)
What upper estimates of $\Delta$ can be given?
The following result is one of the possible answers.

\begin{theorem}[{\cite[Th. 5.11]{kratzel1988}}]\label{th:tauab}
Let $a<b$ and let $(k,l)=A(\kappa, \lambda)$ be an exponent pair. Then the estimate
\begin{equation*}
\Delta(a,b;x) \ll x^{\alpha} \log x,
\qquad \alpha = {2(k+l-1/2) \over (a+b)}
\end{equation*}
holds under the condition $(2l-1)a \ge 2 k b$.
Here $f(x) \ll g(x)$ denotes $f(x) = O\bigl(g(x)\bigr)$.
If otherwise $(2l-1)a < 2 k b$, then
\begin{equation*}
\Delta(a,b;x) \ll x^{\alpha} \log x,
\qquad \alpha = {k\over(1-l)a+k b}.
\end{equation*}
\end{theorem}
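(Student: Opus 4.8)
The plan is to reduce $\Delta(a,b;x)$ to sums of the sawtooth function $\psi(t)=t-\lfloor t\rfloor-1/2$, to estimate those by the exponent pair, and to optimise a splitting parameter; the two cases of the theorem will correspond to whether the optimal split lies in the interior of the admissible range or degenerates to an endpoint. Applying the Dirichlet hyperbola method to $\#\{(m,n):m^an^b\le x\}$ with a parameter $M\ge1$ and $N:=(x/M^a)^{1/b}$, writing $\lfloor t\rfloor=t-1/2-\psi(t)$, and extracting the appropriate main term by Euler--Maclaurin summation, one obtains
$$
\Delta(a,b;x)=-\sum_{m\le M}\psi\bigl((x/m^a)^{1/b}\bigr)-\sum_{n\le N}\psi\bigl((x/n^b)^{1/a}\bigr)+O(1).
$$
As in the classical case $a=b=1$, this needs some care: the boundary contributions --- the corner term $\lfloor M\rfloor\lfloor N\rfloor$, the Euler--Maclaurin endpoint terms, and the $x^{1/a}$- and $x^{1/b}$-scale pieces they produce --- must cancel against the residual main term, and for this the phases $(x/m^a)^{1/b}$, $(x/n^b)^{1/a}$ should stay bounded away from $1$, i.e. $M,N\gg1$. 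Taking $M=O(1)$ and $N\asymp x^{1/b}$ is admissible and leaves only the second sum up to $O(1)$.

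\emph{The exponent-pair estimate.} The workhorse is: if $(\kappa,\lambda)$ is an exponent pair and $(k,l)=A(\kappa,\lambda)$, then for a monomial phase $f(t)=yt^{-\sigma}$ with $y,\sigma>0$ and any dyadic $P$ with $yP^{-\sigma}\gg1$,
$$
\sum_{P<m\le 2P}\psi\bigl(f(m)\bigr)\ll \bigl(f(P)\bigr)^{2k}P^{2l-1}+1 .
$$
This follows by truncating $\psi(t)=-\sum_{1\le h\le H}(\pi h)^{-1}\sin 2\pi ht+(\text{error})$, estimating each $\sum_m e^{2\pi i h f(m)}$ by the pair $(\kappa,\lambda)$, summing over $h$ and optimising $H$; the exponents that appear are $\kappa/(\kappa+1)$ and $\lambda/(\kappa+1)$, which equal $2k$ and $2l-1$ --- this is exactly why ``$(k,l)=A(\kappa,\lambda)$'' is the natural hypothesis. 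Summing over the dyadic blocks of $[1,M]$, and using $\ll P$ on the blocks where $f(P)$ is so large that $(f(P))^{2k}P^{2l-1}>P$ --- the crossover being at $P^*(y,\sigma):=y^{k/(1-l+\sigma k)}$ --- gives, for $M\ge P^*$,
$$
\sum_{m\le M}\psi\bigl(ym^{-\sigma}\bigr)\ll
\begin{cases}
\bigl(yM^{-\sigma}\bigr)^{2k}M^{2l-1}, & 2l-1\ge 2\sigma k,\\
P^*(y,\sigma), & 2l-1<2\sigma k,
\end{cases}
$$
the second case occurring because the per-block bound is then decreasing in $P$, so the block near $P^*$ dominates. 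From the exponent-pair inequalities $k+l\le1$, $l\ge1/2$ and $2l-1\ge2k$ one checks that the crossover term $P^*$ never exceeds the bounds claimed below.

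\emph{The two cases.} For the first sum $\sigma=a/b$, and $2l-1\ge2\sigma k$ holds automatically (because $2l-1\ge2k$ and $a<b$); for the second sum $\sigma=b/a$, and $2l-1\ge2\sigma k\iff(2l-1)a\ge2kb$, which is precisely the first alternative of the theorem. If $(2l-1)a\ge2kb$, I would split at the balance point $M=N=x^{1/(a+b)}$: the two phases are then $x^{1/(a+b)}\gg1$, both sums are $\ll x^{(2k+2l-1)/(a+b)}$, and the condition $(2l-1)a\ge2kb$ together with $k+l\le1$ keeps the crossover terms no larger, so $\Delta(a,b;x)\ll x^{\alpha}$ with $\alpha=2(k+l-1/2)/(a+b)$. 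If $(2l-1)a<2kb$, one checks --- using that $t(1-t)$ is maximised near $t=1/2$ and $0\le2k\le2l-1\le1$ --- that no balanced split beats the choice $M=O(1)$, for which only the second sum matters; it falls entirely in the large-phase regime and is $\ll P^*(x^{1/a},b/a)=x^{k/((1-l)a+kb)}$. In either case the Fourier truncation (and, when $\kappa=0$, the harmonic sum $\sum_{h\le H}h^{-1}$) contributes a factor $\log x$, giving $x^{\alpha}\log x$.

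\emph{Main obstacle.} The delicate part is the exponent-pair estimate for the $\psi$-sums --- making it uniform in $y$ and in the truncation length, handling the ranges where the auxiliary parameter $\asymp hyP^{-\sigma-1}$ falls below $1$ (so the exponent-pair inequality is not directly applicable, and one reverts to the Kusmin--Landau bound or a trivial estimate), and controlling the truncation error $\sum_{m\le M}\min\bigl(1,(H\|f(m)\|)^{-1}\bigr)$. The hyperbola-method reduction is routine but not automatic, the $x^{1/a}$- and $x^{1/b}$-scale cancellations having to be checked; the optimisation itself is elementary once the dividing line is seen to be exactly $(2l-1)a=2kb$.
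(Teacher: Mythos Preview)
The paper does not prove this theorem at all: it is quoted verbatim from Kr\"atzel's monograph (Theorem~5.11 there) and used as a black box to feed the optimisation algorithm, so there is no ``paper's own proof'' to compare against. Your sketch is, however, precisely the classical argument Kr\"atzel gives: the hyperbola-method reduction to two $\psi$-sums, the Vaaler/van-der-Corput truncation of $\psi$ combined with the exponent pair $(\kappa,\lambda)$ on the resulting exponential sums, the identification of the emerging exponents $\kappa/(\kappa+1)=2k$ and $\lambda/(\kappa+1)=2l-1$ with the $A$-processed pair, and the dichotomy between the balanced split $M=N=x^{1/(a+b)}$ and the degenerate endpoint according to the sign of $(2l-1)a-2kb$. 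The computations you record (the crossover $P^*$, the check that $P^*\le N$ via $l\le1$, and the inequality $2l-1\ge 2k$ for $A$-processed pairs from $\lambda\ge\kappa$) are all correct, so the sketch is sound and faithful to the source; it simply proves more than the present paper needs.
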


Taking into account Lemma \ref{l:lemma1} the condition
$(k,l)=A(\kappa,\lambda)$ can be rewritten
as~$k<1/6$ and $l>2/3$. Thus
\begin{align*}
\theta_1 = {2(k+l-1/2)\over a+b}, &&
LC_1 = \bigl\{ (2l-1)a \ge 2 k b, k<1/6, l>2/3 \bigr\},
\\
\theta_2 = {k\over(1-l)a+k b}, &&
LC_2 = \bigl\{ (2l-1)a < 2 k b, k<1/6, l>2/3 \bigr\}.
\end{align*}
Using proposed algorithm we can compute $\inf\theta_1$
under constraints $LC_1$ (which refers to the first case
of Theorem \ref{th:tauab}), compute $\inf\theta_2$
under constraints $LC_2$ (which refers to the second case)
and take lesser of the obtained values.
Observed results shows that for $a=1$, $b=2^r$, $r\ge10$,
the second case provides better results and exponent pair
has form
$$ A^{r-1} BA A^{r-4} BA BA \ldots. $$
This leads us to the following statement.

\begin{theorem}\label{th:sum-of-tau1m-precise}
For a fixed integer $r\ge5$
we have $\Delta(1,2^r;x) \ll x^{\alpha} \log x$, where
$$
\alpha = {2^r-2r \over 2^{2r} - r\cdot2^r - 2r^2 + 2r - 4}
< {1\over 2^r+r}.
$$
\end{theorem}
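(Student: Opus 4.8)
The plan is to exhibit an explicit exponent pair of the required shape and apply the second case of Theorem~\ref{th:tauab} with $a=1$, $b=2^r$. Concretely, I would set $\kappa_r := A^{r-1} BA\, (1/6,2/3)$ and let $(k,l) := A(\kappa_r)$, so that automatically $(k,l)=A(\kappa,\lambda)$ as required, and by Lemma~\ref{l:lemma1} we have $k<1/6$, $l>2/3$. Using the projective machinery of Section~\ref{s:projective}, $(k:l:1)$ is obtained by applying the integer matrix $\mathcal A^{r}\,\BA\,\mathcal A^{-1}$ — or more cleanly $\mathcal A \cdot (\mathcal A^{r-1}\BA)$ — to $(1/6:2/3:1)=(1:4:6)$. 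Since $\mathcal A$ acts on the third coordinate by $m\mapsto 2k+2m$ and leaves the first coordinate alone, an induction on $r$ gives closed forms: one finds $A^{r}(1/6,2/3)$ has first coordinate $1/(2^{r+1}+2^r-2)$-type denominators, and tracking one $BA$ insertion yields rational $k,l$ with denominators that are polynomials in $2^r$ and $r$. The key computation is to verify that $\alpha = k/\bigl((1-l)\cdot 1 + k\cdot 2^r\bigr)$ simplifies to $(2^r-2r)/(2^{2r}-r\,2^r-2r^2+2r-4)$.

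The steps, in order, would be: (i) compute $A^{n}(1/6,2/3)$ in projective coordinates by induction — I expect $A^n(1:4:6) = (1 : *_n : 2^{n+1}\cdot 3 - \text{lower order})$ with the middle coordinate a linear recurrence in $n$; (ii) apply one $BA$ and then $n-1$ more copies of $A$ handled by the same recurrence with shifted initial data, thereby getting $\kappa_r=A^{r-1}BA(1/6,2/3)$ explicitly; (iii) apply the final $A$ to pass to $(k,l)$, read off $k$ and $1-l$ as explicit rationals; (iv) substitute into $\alpha=k/((1-l)+2^r k)$ and simplify; (v) check $(2l-1)\cdot 1 < 2k\cdot 2^r$ so that the second case of Theorem~\ref{th:tauab} indeed applies, and check $k<1/6$, $l>2/3$ (the latter two are free from Lemma~\ref{l:lemma1} once we know $\kappa_r$ satisfies its hypotheses, which holds because $A(1/6,2/3)=(1/14,11/14)\in P'$ and $A,BA$ preserve the relevant rectangles). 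Finally (vi) prove the stated bound $\alpha < 1/(2^r+r)$ by cross-multiplying: it reduces to checking $(2^r-2r)(2^r+r) < 2^{2r}-r\,2^r-2r^2+2r-4$, i.e. $2^{2r}-r\,2^r-2r^2 < 2^{2r}-r\,2^r-2r^2+2r-4$, i.e. $0<2r-4$, which holds for $r\ge3$ hence certainly for $r\ge5$.

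Two remarks on scope. First, Theorem~\ref{th:tauab} is stated for exponent pairs, and $(k,l)$ above lies in $P\subset\conv P$, so it is a legitimate exponent pair; no appeal to the optimization algorithm is logically needed — the algorithm merely suggested the family $A^{r-1}BA\,A^{r-4}BA\,BA\cdots$, of which we use only the leading block $A^{r-1}BA$, since applying $A$ once more and truncating still yields a valid (if slightly suboptimal) exponent pair and the resulting $\alpha$ is already below the clean target $1/(2^r+r)$. Second, one should double-check the edge of the range $r=5$: here the ``$r\ge10$'' observed pattern does not literally apply, but the computation in (i)–(vi) is uniform in $r\ge5$ and the inequality $2r-4>0$ is what pins down the lower bound; it is worth verifying numerically that for $r=5$ the second case of Theorem~\ref{th:tauab} is the one in force (equivalently $(2l-1)<2^{r+1}k$).

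The main obstacle I expect is purely bookkeeping: carrying the linear recurrences for the middle and bottom projective coordinates through the composition $A\cdot(A^{r-1}BA)$ without sign or off-by-one errors, and then performing the algebraic simplification of $k/((1-l)+2^r k)$ into the advertised closed form. There is no conceptual difficulty — it is a finite, explicit matrix computation followed by a rational-function identity and one trivial polynomial inequality — but the denominator $2^{2r}-r\,2^r-2r^2+2r-4$ is delicate enough that I would want to confirm it with an independent symbolic evaluation (e.g.\ in PARI/GP, consistent with the paper's computational setup) before committing to it.
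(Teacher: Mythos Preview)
Your overall strategy---pick an explicit exponent pair, verify it lands in the second case of Theorem~\ref{th:tauab}, and compute $\alpha$---is exactly the paper's approach, and your step~(vi) reducing $\alpha<1/(2^r+r)$ to $0<2r-4$ is correct. The gap is in the choice of pair. The paper uses
\[
(k_r,l_r)=A^{r-1}\,BA\,A^{r-4}(1/6,2/3),
\]
whereas you propose the truncation $(k,l)=A\bigl(A^{r-1}BA(1/6,2/3)\bigr)=A^r BA(1/6,2/3)$. This truncation does not work. In projective coordinates one finds $\A^r\BA(1{:}4{:}6)=(2:11\cdot2^r-2r-7:11\cdot2^r-4)$, so $k=2/(11\cdot 2^r-4)$ and $2l-1=(11\cdot 2^r-4r-10)/(11\cdot2^r-4)$; the second-case condition $(2l-1)<2^{r+1}k$ becomes $11\cdot2^r-4r-10<2^{r+2}$, i.e.\ $7\cdot 2^r<4r+10$, which fails for every $r\ge2$. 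Hence your pair lies in the \emph{first} case of Theorem~\ref{th:tauab}, giving $\alpha=(2k+2l-1)/(1+2^r)$, and a direct check shows this exceeds $1/(2^r+r)$ for all $r\ge3$. So step~(iv) cannot produce the advertised value of~$\alpha$, and step~(v) fails outright; the ``slightly suboptimal'' truncation is in fact fatally suboptimal.

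The tail $A^{r-4}$ is not decorative: it is precisely what pushes $k_r$ up enough that $2^{r+1}k_r$ barely exceeds $2l_r-1$. The paper computes the margin as $2l_r-2^{r+1}k_r-1=(2r^2+4-2^{r+1})/D$, which is negative exactly for $r\ge5$; this is where the hypothesis $r\ge5$ actually enters, not in step~(vi). With your pair one has $2^{r+1}k\approx 4/11$, far too small. To repair the argument you must keep the full word $A^{r-1}BA\,A^{r-4}$. The paper handles the resulting computation cleanly by diagonalising $\A=\mathcal S\left(\begin{smallmatrix}1&1&0\\0&1&0\\0&0&2\end{smallmatrix}\right)\mathcal S^{-1}$, so that $\A^n$ has an explicit closed form and the whole calculation is a single $3\times3$ matrix product rather than the inductive recurrence you sketch in~(i)--(ii).
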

\begin{proof}
Consider an exponent pair
$$
(k_r, l_r) := A^{r-1} BA A^{r-4} (1/6, 2/3).
$$
We have
$$
\A = {\mathcal S} \begin{pmatrix}
	1&1&0 \\ 0&1&0 \\ 0&0&2
\end{pmatrix} {\mathcal S}^{-1},
\qquad
{\mathcal S} = \begin{pmatrix}
	0&-1&0 \\ 1&0&1 \\ 0&2&1
\end{pmatrix}.
$$
Thus
$
\A^n = {\mathcal S} \left( \begin{smallmatrix}
	1&n&0 \\ 0&1&0 \\ 0&0&2^n
\end{smallmatrix} \right) {\mathcal S}^{-1}.
$
Note that $\mu(1/6,2/3) = (1:4:6)$ and
$$
\A^{r-1} \BA \A^{r-4} (1:4:6)
=
\begin{pmatrix}
2^r - 2r \\
2^{2r+1} - (3r+4) \cdot 2^r + 2r^2+2r+4 \\
2^{2r+1} - (2r+4) \cdot 2^r + 4r
\end{pmatrix}.
$$
Applying $\mu^{-1}$ we get
$$
k_r = {2^r-2r \over 2^{2r+1}-(2r+4)\cdot 2^r + 4 r},
\quad
l_r = 1 - { r\cdot 2^r - 2r^2+2r-4
	\over 2^{2r+1}-(2r+4)\cdot 2^r + 4 r}.
$$
Now for $r\ge5$
$$
2l_r - 2\cdot2^r k_r - 1 = {2r^2+4-2^{r+1}
	\over 2^{2r}-(r+2)\cdot 2^r + 2r} < 0.
$$
This proves that $(k_r, l_r)$ satisfies the second case
of Theorem \ref{th:tauab} and finally
$$
\alpha = {k_r \over 2^r k_r - l_r + 1}.
$$
\end{proof}

In the same manner one can estimate $\Delta(a,2^r;x)$ for odd $a$.
Here is one more example.

\begin{theorem}\label{th:sum-of-tau3m-precise}
For a fixed integer $r\ge 1$ we have
$\Delta(3,2^r;x) \ll x^{\alpha+\eps}$, where
$$
\alpha = {1\over 2^r+3r-88/17}.
$$
\end{theorem}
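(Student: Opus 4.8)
The plan is to mimic the proof of Theorem~\ref{th:sum-of-tau1m-precise}, but starting from the best known initial pair $H_{05}=(32/205+\eps, 1/2+32/205+\eps)$ rather than from $(1/6,2/3)$ — this is what accounts for the appearance of $88/17$ (note $88/17 \approx 5.18$, which is close to but not equal to the value one would get in the clean case, and $205 = ?$, $17\cdot ? $; in any event the rational $88/17$ signals that the underlying pair is $H_{05}$ rather than a limit point). Concretely, I would consider the exponent pair
$$
(k_r, l_r) := A^{s(r)} \cdot BA \cdot A^{t(r)} \, H_{05}
$$
for suitable integer-valued functions $s(r), t(r)$ (linear in $r$, to be read off from the output of the algorithm of Section~\ref{s:algorithm}, exactly as the pattern $A^{r-1}BAA^{r-4}BABA\ldots$ was read off there), and check that it satisfies the second case of Theorem~\ref{th:tauab} with $a=3$, $b=2^r$.

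First I would diagonalize $\A$ as in Theorem~\ref{th:sum-of-tau1m-precise}: $\A = \mathcal S \left(\begin{smallmatrix} 1&1&0\\0&1&0\\0&0&2\end{smallmatrix}\right)\mathcal S^{-1}$, so that $\A^n = \mathcal S \left(\begin{smallmatrix} 1&n&0\\0&1&0\\0&0&2^n\end{smallmatrix}\right)\mathcal S^{-1}$ has entries that are explicit polynomials in $n$ and $2^n$. Then I would compute the projective vector $\mu(k_r,l_r) = \A^{s(r)}\,\BA\,\A^{t(r)}\,\mu(H_{05})$ — a single matrix product whose entries are, after clearing denominators, polynomials in $r$ and $2^r$ — and apply $\mu^{-1}$ to get closed forms for $k_r$ and $l_r$. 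Next I would verify the case distinction of Theorem~\ref{th:tauab}, i.e. show $(2l_r-1)\cdot 3 < 2 k_r \cdot 2^r$ for $r\ge1$; this reduces to checking that a certain polynomial in $r$ and $2^r$ is negative, which is a finite computation plus a monotonicity/exponential-dominance argument for large $r$. Finally, plugging into the second-case formula $\alpha = k_r/\big((1-l_r)\cdot 3 + k_r\, 2^r\big)$ and simplifying should yield $\alpha = 1/(2^r + 3r - 88/17)$; the $\eps$ in the conclusion comes from the $\eps$ in $H_{05}$.

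The main obstacle is twofold: first, correctly identifying the exponent functions $s(r)$ and $t(r)$ (and checking there is no additional tail of operators needed — in Theorem~\ref{th:sum-of-tau1m-precise} the observed pattern had a trailing $BABA\ldots$ but the clean pair $A^{r-1}BAA^{r-4}$ already sufficed, and one must confirm the analogous truncation is legitimate here, i.e. that further operators do not decrease $\alpha$); second, the bookkeeping of the $3\times 3$ matrix product with $\mu(H_{05}) = (32 : 205/2+32 : 205)$ or rather $(64:269:410)$ after clearing the $1/2$, keeping the $\eps$-shift under control. Once the closed forms for $k_r,l_r$ are in hand, the inequality check and the final algebra are routine, so I would not belabor them; the risk is entirely in getting the pattern and the arithmetic exactly right.
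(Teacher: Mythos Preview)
Your overall strategy is exactly the paper's: pick a suitable word in $A$ and $BA$ applied to one of the initial pairs from Proposition~\ref{pr:exp-pairs}, compute the resulting $(k_r,l_r)$ via the Jordan form of~$\A$, check that the second case of Theorem~\ref{th:tauab} applies, and simplify $\alpha=k_r/\bigl(3(1-l_r)+2^r k_r\bigr)$. So there is no methodological difference.

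The concrete gap is your identification of the initial pair. The constant $88/17$ does \emph{not} come from $H_{05}$; it comes from the Huxley~1988 pair $(9/56+\eps,\,37/56+\eps)$, and the word is simply
\[
(k_r,l_r)=A^{r-3}\,BA\,A\,(9/56+\eps,\,37/56+\eps),
\]
so in your notation $s(r)=r-3$ and $t(r)=1$ is constant. Projectively, $\BA\,\A\,(9:37:56)=(51:74:139)$; applying $\A^{\,r-3}$ and using your formula for $\A^n$ gives $k_r=51/D$, $1-l_r=(51(r-3)+65)/D$ with $D=241\cdot 2^{r-3}-102$, hence
\[
\frac{1-l_r}{k_r}=r-3+\frac{65}{51}=r-\frac{88}{51},
\qquad
\alpha=\frac{1}{2^r+3\bigl(r-\tfrac{88}{51}\bigr)}=\frac{1}{2^r+3r-88/17}.
\]
If instead you start from $H_{05}$ with the same shape $A^{s}BA\,A$, one finds $(1-l)/k=s+948/743$, and no integer choice of $s$ yields $r-88/51$; so your heuristic ``$88/17$ signals $H_{05}$'' would have led you to the wrong pair and the algebra would not close. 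The $17$ is really $51/3$, and the $51$ is the first projective coordinate coming from $(9/56,37/56)$, not from $H_{05}=(32/205,\ldots)$. Once the correct pair is in hand the proof is the single line the paper gives; your remaining worries about tails of operators and $\eps$-bookkeeping are not needed here.
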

\begin{proof}
Consider an exponent pair
$
(k, l) := A^{r-3} BA A (9/56+\eps,37/56+\eps)
$.
\end{proof}

In the case of $\Delta(a,b,c)$ one can derive objective
function and constraints from \cite[Th. 6.2, 6.3]{kratzel1988}
and observe the output of the algorithm.

\begin{theorem}
For a fixed integer $r\ge10$ we have
\begin{equation*}\label{eq:sum-of-tau1mm-precise}
\theta(1,2^r,2^r) = {
26\cdot 2^{2r} - (29r+41) 2^r + 16r^2+12r+32
\over
26\cdot 2^{3r} - (16r+41) 2^{2r} + (24r-3) 2^r + 16r+12
} < {1\over 2^r+1}.
\end{equation*}
\end{theorem}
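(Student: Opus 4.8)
The plan is to follow exactly the same strategy used in the proof of Theorem~\ref{th:sum-of-tau1m-precise}, but now starting from the three-dimensional divisor problem estimates of \cite[Th.~6.2, 6.3]{kratzel1988} rather than Theorem~\ref{th:tauab}. First I would extract from those theorems the relevant objective function $\theta(1,2^r,2^r)$ and the accompanying case distinction, which (as in the two-dimensional situation) splits into two regimes governed by linear inequalities in $(k,l)$; running the algorithm of Section~\ref{s:algorithm} on $a=1$, $b=c=2^r$ for $r\ge 10$ suggests that the optimum is attained on a pair of the form $A^{?}\,BA\,A^{?}\,BA\,BA\cdots$ applied to $(1/6,2/3)$, with the exponents of the leading blocks growing linearly in $r$. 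The first step of the written proof is therefore to \emph{guess} the exact word $M\in\{A,BA\}^*$ from the numerical output and define the candidate exponent pair $(k_r,l_r):=M(1/6,2/3)$.

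Next I would compute $(k_r,l_r)$ in closed form using the projective machinery of Section~\ref{s:projective}. The key computational device is the diagonalization
$$
\A = {\mathcal S} \begin{pmatrix} 1&1&0\\0&1&0\\0&0&2 \end{pmatrix} {\mathcal S}^{-1},
\qquad
{\mathcal S} = \begin{pmatrix} 0&-1&0\\1&0&1\\0&2&1 \end{pmatrix},
$$
already used above, which gives $\A^n = {\mathcal S}\bigl(\begin{smallmatrix}1&n&0\\0&1&0\\0&0&2^n\end{smallmatrix}\bigr){\mathcal S}^{-1}$; applying the resulting matrix product to $\mu(1/6,2/3)=(1:4:6)$ and then $\mu^{-1}$ yields rational functions of $r$ and $2^r$ for $k_r$ and $l_r$. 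Substituting these into the formula for $\theta(1,2^r,2^r)$ from \cite{kratzel1988} and simplifying should reproduce the stated value
$$
\theta(1,2^r,2^r)=\frac{26\cdot 2^{2r}-(29r+41)2^r+16r^2+12r+32}{26\cdot 2^{3r}-(16r+41)2^{2r}+(24r-3)2^r+16r+12}.
$$

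Two verification steps then remain. First, I must confirm that $(k_r,l_r)$ falls into the branch of \cite[Th.~6.2, 6.3]{kratzel1988} that produces this $\alpha$; as in Theorem~\ref{th:sum-of-tau1m-precise}, this amounts to checking the sign of a linear combination of $k_r$, $l_r$, $1$ with coefficients depending on $a,b,c$, which reduces to showing that an explicit polynomial in $r$ and $2^r$ has a fixed sign for all $r\ge 10$ — a routine estimate since the $2^{2r}$ term dominates. Second, I must establish the inequality $\theta(1,2^r,2^r)<1/(2^r+1)$, equivalently that
$$
(2^r+1)\bigl(26\cdot 2^{2r}-(29r+41)2^r+16r^2+12r+32\bigr) < 26\cdot 2^{3r}-(16r+41)2^{2r}+(24r-3)2^r+16r+12;
$$
expanding the left side and cancelling the leading $26\cdot 2^{3r}$ leaves a polynomial inequality in $r$ and $2^r$ whose dominant surviving term is negative, so it holds for all sufficiently large $r$, and the threshold $r\ge 10$ is checked directly. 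The main obstacle I anticipate is purely bookkeeping: correctly identifying the word $M$ from the algorithm's output and carrying the $3\times 3$ matrix multiplication through without arithmetic slips, since an error there propagates into every subsequent simplification; once $(k_r,l_r)$ is pinned down correctly, the remaining sign checks are mechanical. It is also worth noting that the optimality claim (that this $\alpha$ is actually the infimum, not merely an admissible value) rests on the algorithm's output rather than on an independent lower-bound argument, exactly as in the preceding theorems of this section.
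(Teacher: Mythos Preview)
Your plan is correct and is exactly the paper's approach: the paper's one-line proof simply records the exponent pair $(k,l)=A^{r-1}BA^{r-2}BA\,BA^{2}\cdot B(0,1)$ and cites \cite[Th.~6.2]{kratzel1988}, which in your preferred generators and base point is $A^{r-1}(BA)A^{r-3}(BA)^{2}(1/6,2/3)$, matching your predicted shape $A^{?}\,BA\,A^{?}\,BA\,BA\cdots$. The paper omits the branch-condition check and the final inequality entirely, so your proposal is already more detailed than what appears in print.
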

\begin{proof}
Follows from \cite[Th. 6.2]{kratzel1988} with
$
(k,l) = A^{r-1} B A^{r-2} BA BA^2 \cdot B (0,1)
$.
\end{proof}

\begin{table}
\begin{tabular}{cccc}
$a$ & $b$ & $(k,l)$ & $\Xi(a,b)$ \\
1 & 2 & $BA H_{05}$ & $269/1217$ \\
1 & 3 & $(BA)^2 A BA H_{05}$ & $1486/8647$ \\
1 & 4 & $H_{05}$         & $111/790$ \\
1 & 5 & $A BA A^2 BA A (BA)^2 A^2 M^\infty (0,1)$ & ${(15921-2 c) / 30437}$ \\
1 & 6 & $(A BA)^3 (BA)^3 A^3 BA (0,1)$ & $669/6305$ \\
1 & 7 & $A (BA)^2 BA A (BA)^2A^2 M^\infty (0,1) $ & ${(9370-c)/34469}$ \\
1 & 8 & $A (BA)^4 (A^2 BA A)^\infty (0,1)$ & ${(5+\sqrt{809}) / 392}$ \\
1 & 9 & $ A (BA)^2 A M^\infty (0,1) $ & ${(10551-c)/56976}$ \\
1 &10 & $A (BA)^2 (A^2 (BA)^2)^2 A BA H_{05} $ & $ 150509/2096993$ \\
2 & 3 & $BA A (BA)^2A^2 M^\infty (0,1)$ & ${(c-4047)/15688}$ \\
2 & 4 & $BA H_{05}$ & $269/2434$ \\
2 & 5 & $M^\infty  (0,1)$ & ${(c-4311)/18672}$ \\
3 & 4 & $BA A H_{05}$ & $1819/19369$ \\
3 & 5 & $BA A (BA)^3 A^2 (BA)^3 A (BA)^5 A^2 BA (0,1)$ & $63916/774807$ \\
4 & 5 & $BA A H_{05}$ & $1819/24903$ \\
\end{tabular}
\caption{Estimates of $\Xi$. Here
$M=(BA)^6 (A BA)^2 BA A^2$ and $c=\sqrt{37368753}$.}
\label{t:menzer-nowak}
\end{table}

Further, consider the asymmetric divisor problem
with congruence conditions on divisors. Namely,
let $\tau(a,m_a,r_a;b,m_b,r_b; n)$ be the number
of $(d_a,d_b)$ such that
$$
d_a^a d_b^b = n,
\qquad
d_a \equiv r_a \pmod {m_a},
\qquad
d_b \equiv r_b \pmod {m_b}.
$$
Menzer and Nowak showed in \cite{menzer1989} that
if $a < b$ then the error term in the asymptotic estimate of
$$
\sum_{n\le x} \tau(a,m_a,r_a;b,m_b,r_b; n)
$$
has form
$
\left( x / m_a^a m_b^b \right)^{\Xi(a,b)+\varepsilon}
$, where
$$
\Xi(a,b) := \inf_{(k,l)\in \mathop{\mathrm{conv}} P} \max
\left\{ {k+l\over (k+1)(a+b)}, {k\over k b+a(1+k-l) } \right\},
$$
where $\eps>0$ is arbitrary small. They also listed estimates
of $\Xi(a,b)$ for $1\le a<b\le5$. As soon as $\Xi(a,b)$ is
of form \eqref{eq:theta} we can refine all their results.
See Table \ref{t:menzer-nowak}.

\bigskip

Various estimates of the Riemann zeta function depends on optimization tasks \eqref{eq:opt-problem}. The following theorem seems to be the simplest example.

\begin{theorem}[{\cite[(7.57)]{ivic2003}}]
Let $\zeta$ denote the Riemann zeta function and $\sigma \ge 1/2$. Further, let $\mu(\sigma)$ be an infimum of all $x$ such that $\zeta(\sigma+it)  \ll t^x$. Then
$$
\mu(\sigma) \le {k+l-\sigma \over 2}.
$$
for every exponent pair $(k,l)$ such that $l-k \ge \sigma$.
\end{theorem}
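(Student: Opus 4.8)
The plan is to combine the classical approximate functional equation for $\zeta$ with a dyadic decomposition in which the exponent pair bound is applied to each block. One may assume $1/2\le\sigma<1$: for $\sigma\ge1$ no exponent pair satisfies $l-k\ge\sigma$ unless $\sigma=1$ and $(k,l)=(0,1)$, and in that remaining case the asserted inequality is $\mu(1)\le0$, which holds because $\zeta(1+it)\ll\log t$. For $1/2\le\sigma<1$ and $t$ large I would begin from the Hardy--Littlewood approximate functional equation with both lengths equal to $x:=\sqrt{t/2\pi}$ (see \cite[Ch.~4]{ivic2003}),
\[
\zeta(\sigma+it)=\sum_{n\le x}n^{-\sigma-it}+\chi(\sigma+it)\sum_{n\le x}n^{\sigma-1+it}+O\bigl(t^{-\sigma/2}\bigr),
\]
together with the standard estimate $|\chi(\sigma+it)|\ll t^{1/2-\sigma}$. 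Since $k\ge0$ and $l\ge1/2$ give $k+l>0$, the error term is $O(t^{-\sigma/2})=O(t^{(k+l-\sigma)/2})$, so it remains to bound the two Dirichlet polynomials.

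Both polynomials have the form $\sum_{n\le x}n^{\beta}n^{\pm it}$ with $\beta=-\sigma$ for the first and $\beta=\sigma-1$ for the second; taking complex conjugates of the inner sum where needed we may assume the oscillating factor is $n^{it}=e^{2\pi if(n)}$ with $f(u)=(t/2\pi)\log u$. I would split the sum into dyadic blocks $(N,2N]$, $1\le N\le x$ (the term $n=1$ is $O(1)$), and estimate $\sum_{N<n\le M}n^{it}$ for $N<M\le2N$ by the exponent pair property. This requires checking that $f$ meets the conditions in \cite[Ch.~2]{kratzel1988} with $s=1$, $y=t/2\pi$, $a=N$, $b=2N$, $u=3$: $f$ is infinitely differentiable on $[N,2N]$; one computes $f^{(\nu+1)}(u)=(t/2\pi)(-1)^{\nu}\nu!\,u^{-\nu-1}=y\,{d^{\nu}\over du^{\nu}}u^{-1}$ identically, so the derivative comparison holds trivially (its left-hand side is $0$ and its right-hand side positive); and $z=ya^{-s}=t/(2\pi N)\ge\sqrt{t/2\pi}>1$ for every dyadic $N\le x$ once $t>2\pi$. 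Hence $\sum_{N<n\le M}n^{it}\ll z^{k}a^{l}=(t/2\pi N)^{k}N^{l}$ uniformly in $M$, and Abel summation against the monotone factor $n^{\beta}$ gives
\[
\Bigl|\sum_{N<n\le2N}n^{\beta}n^{it}\Bigr|\ll N^{\beta}(t/N)^{k}N^{l}\ll t^{k}N^{\beta+l-k}.
\]

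Summing over the $O(\log t)$ dyadic scales $N\le x$ is where the hypothesis $l-k\ge\sigma$ is used: for $\beta=-\sigma$ the exponent satisfies $\beta+l-k=l-k-\sigma\ge0$, and for $\beta=\sigma-1$ it equals $l-k+\sigma-1\ge2\sigma-1\ge0$. In both cases the sum over scales is dominated, up to a factor $\log t$ when the exponent is $0$, by the top block $N\asymp x\asymp t^{1/2}$. For the first polynomial this yields $\ll t^{k}x^{l-k-\sigma}\asymp t^{(k+l-\sigma)/2}$; for the second, $\ll t^{k}x^{l-k+\sigma-1}\asymp t^{k+(l-k+\sigma-1)/2}$, which after multiplying by $|\chi(\sigma+it)|\ll t^{1/2-\sigma}$ collapses once more to $t^{(k+l-\sigma)/2}$. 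Hence $\zeta(\sigma+it)\ll t^{(k+l-\sigma)/2}\log t$, and as $\mu(\sigma)$ is an infimum of admissible exponents the logarithmic factor is harmless, so $\mu(\sigma)\le(k+l-\sigma)/2$.

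The step I expect to demand the most care is the uniform verification that $f(u)=(t/2\pi)\log u$ genuinely satisfies the definition of an exponent pair across \emph{all} dyadic scales simultaneously --- in particular that $z=ya^{-s}>1$ and $1\le a<b<au$ hold for every $N\le\sqrt{t/2\pi}$, so that the constant implicit in $\sum n^{it}\ll z^{k}a^{l}$ does not depend on $t$ or on the dyadic scale. It is precisely the cut-off $x=\sqrt{t/2\pi}$ furnished by the approximate functional equation that keeps $z>1$ throughout; a cruder truncation, such as one of length $\asymp t$ from Euler--Maclaurin summation, would leave the longest blocks outside the regime $z>1$, where only the trivial bound $N^{1-\sigma}$ is available and, since exponent pairs have $k+l\le1$, that bound is too weak. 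Everything beyond this reduction is routine --- Abel summation on each block and a geometric series summed by its largest term exactly because $l-k\ge\sigma$.
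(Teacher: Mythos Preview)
The paper does not supply its own proof of this statement: the theorem is quoted verbatim from Ivi\'c \cite[(7.57)]{ivic2003} and used only as input to the optimisation problem, so there is nothing in the paper to compare against.

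Your argument is correct and is precisely the standard derivation one finds in Ivi\'c: the approximate functional equation with both lengths $\sqrt{t/2\pi}$, a dyadic splitting of each Dirichlet polynomial, the exponent-pair bound applied to $f(u)=(t/2\pi)\log u$ on each block, Abel summation to reinstate the factor $n^{\beta}$, and the observation that the hypothesis $l-k\ge\sigma$ makes both exponents $\beta+l-k$ non-negative so that the top block dominates. The edge case $\sigma\ge1$ and the handling of possible $\log t$ factors are also in order. In short, your proof is the canonical one and would serve perfectly well as the omitted argument.
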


Better results on $\mu$ leads to better estimates for power moments of $\zeta$, and the last are helpful to improve estimates in multidimensional divisor problem. See \cite[Th. 8.4, 13.2, 13.4]{ivic2003}.

Table \ref{table:zeta} contains several results on $\mu(\sigma)$ obtained with the use of the proposed algorithm. Results are accompanied with the number of calls to $L(\cdot, \cdot, \cdot, \cdot)$ up to the given depth of search.

\begin{table}
\centering
\begin{tabular}{ccccc}

$\sigma$ & $\mu(\sigma)$ & Depth 100 & Depth 1000 \\
3/5 & 1409/12170 & 10 & 10 \\
2/3 & 0.0879154 & 154 & 1609 \\
3/4 & 0.0581840 & 154 & 1610 \\
4/5 & 3/71 & 103 & 1003 \\

\end{tabular}
\caption{Estimates for $\mu(\sigma)$ and the number of calls to $L$.}
\label{table:zeta}
\end{table}

\bibliographystyle{ugost2008s}
\bibliography{taue}

\end{document}